\newtheorem{theorem}{Theorem}
\newtheorem*{theorem*}{Theorem}
\newtheorem{cor}[theorem]{Corollary}
\newtheorem{lemma}[theorem]{Lemma}
\newtheorem{obs}[theorem]{Observation}
\newtheorem{prop}[theorem]{Proposition}
\newtheorem{claim}[theorem]{Claim}
\newcommand*{\myproofname}{Proof}
\newenvironment{claimproof}[1][\myproofname]{\begin{proof}[#1]}{\end{proof}}
\DeclareMathOperator{\sat}{sat}
\DeclareMathOperator{\wsat}{ssat}
\DeclareMathOperator{\ex}{ex}
\newcommand{\F}{\mathcal{F}}
\newcommand{\p}{\mathcal{P}}
\newcommand{\h}{\mathcal{H}}
\newcommand*{\floorfrac}[2]{\mathopen{}\left\lfloor\frac{#1}{#2}\right\rfloor\mathclose{}}
\newcommand*{\abs}[1]{\lvert #1\rvert}
\newcommand{\floor}[1]{\left\lfloor{#1}\right\rfloor}
\newcommand{\ceil}[1]{\left\lceil{#1}\right\rceil}
\title{Hypergraph saturation for the bow tie} 
\author{Stijn Cambie\thanks{Department of Computer Science, KU Leuven Campus Kulak-Kortrijk, 8500 Kortrijk, Belgium. Supported by the Research Foundation Flanders (FWO) (grant number 1225224N). E-mail: {\tt stijn.cambie@hotmail.com}} \and Nika Salia\thanks{King Fahd University of Petroleum and Minerals, Dhahran, Saudi Arabia. 
supported by the National Research, Development and Innovation Office NKFIH, grants K132696, E-mail: {\tt salianika@gmail.com}}}
\begin{document}
\parindent=0cm

\maketitle

\begin{abstract}
Erdős and Sós initiated the study of the maximum size of a $k$-uniform set system, for $k \geq 4$, with no singleton intersections $50$ years ago. In this work, we investigate the dual problem: finding the minimum size of a $k$-uniform hypergraph with no singleton intersections, such that adding any missing hyperedge forces a singleton intersection. These problems, known as saturation and semi-saturation, are typically challenging. Our focus is on an elementary-to-state case in the line of work by Erdős, Füredi and Tuza. We establish tight linear bounds for $k=4$, marking one of the first non-obvious cases with such a bound.
\end{abstract}

\section{Introduction}\label{sec:intro}

A (hyper)graph is said to be $\p$-saturated if it lacks a specific property $\p$ but acquires this property upon the addition of any new edge. 
Many problems in graph theory focus on determining the maximum or minimum number of edges that a graph with $n$ vertices can have while remaining saturated for a particular property $\p$.
One of the most studied examples is the Turán number, denoted as $\ex(n,F)$. 
This parameter represents the maximum number of edges in a graph $G$ of order $n$ that is $F$-free, which equals the maximum size of an $F$-saturated graph (here $\p$ is the absence of any subgraph isomorphic to $F$).
Saturation problems have wide applications in combinatorics and can also be relevant in other fields, such as bootstrap percolation~\cite{BBMR12}.

The study of minimum saturated (hyper)graphs was first introduced by Erdős, Hajnal and Moon in 1964 \cite{erdos1964problem}. 
The saturation number of a graph $H$ is the minimum number of edges in an $H$-saturated graph of order $n$, and for a $k$-uniform hypergraph $H$ it is defined as follows:

\[
\sat_k(n, H) = \min\{e(G) : G \text{ is an $H$-saturated $k$-uniform hypergraph of order } n\}.
\]

It is also interesting to explore this question without requiring $G$ to be $H$-free. In that case, we say that $G$ is $H$-semi-saturated. 
The minimum number size of an $n$-vertex $k$-uniform $H$-semi-saturated hypergraph is known as the \emph{semi-saturation number}, denoted by $\wsat_k(n,H)$.
Trivially, $\wsat_k(n,H)\le \sat_k(n,H)$, since a saturated (hyper)graph is also semi-saturated. In this paper, we do not consider other variants, such as weak saturation, induced saturation and saturation for Berge hypergraphs.
We refer interested readers to~\cite{faudree2011survey,AC19,bulavka2023weak,diskin2024saturation,kalai1985hyperconnectivity,kronenberg2021weak,moshkovitz2015exact,pikhurko2001weakly,shapira2023weakly}.

Bollobás determined the exact saturation number for complete $k$-uniform hypergraphs~\cite{bollobas1986extremal}. 
Erdős, Furedi and Tuza~\cite{erdHos1991saturated} and Pikhurko~\cite{pikhurko2000asymptotic,pikhurko2004results}
obtained results for the saturation number of complete $k$-uniform stars $S^k_{1,m-1}$.
Pikhurko~\cite{pikhurko1999minimum} proved that for any finite family of forbidden $k$-uniform hypergraphs $\F$, $\sat_k(n,\F)=O(n^{k-1})$, this implies a conjecture of Tuza~\cite{tuza1986generalization, tuza1988extremal}.
It is unknown if the condition $\F$ being finite is necessary to obtain this result.

In 2023, English, Kostochka and Zirlin~\cite{EKZ23} studied the saturation of a three-uniform linear cycle of length three. 
Specifically, let $C_3^3$ be a three-uniform hypergraph with six vertices and three hyperedges, where each pair of hyperedges shares exactly one common vertex, and no vertex belongs to all three hyperedges. They obtained the following bounds:
\[
\left(\frac{4}{3} + o(1) \right)n \leq \sat_3(n, C_3^3) \leq \frac{3}{2}n + O(1).
\]
The lack of knowledge regarding the saturation number of cycles highlights the hardness of saturation problems. 

Erdős, F{\"u}redi and Tuza~\cite{EFT91} determined the saturation number for families $H_k(k + 1, k)$, $H_k(2k - 2, 2)$ and $H_k(k+1, 3)$, where $H_k(p, q)$ represents the family of all $k$-uniform hypergraphs with $p$ vertices and $q$ edges. 
By determining the saturation and semi-saturation numbers of the bow tie $B_k=H_k(2k-1,2)$, a $k$-uniform hypergraph with $2k-1$ vertices and two hyperedges sharing one vertex, we consider a missing elementary but hard case (for $k \ge 4$).

For $k = 2$, $\floorfrac{n}{2}$ independent edges form a $B_2$-(semi)-saturated graph and since there are no two isolated vertices in any $B_2$-(semi)-saturated graph, we conclude that
\[
\wsat_2(n, B_2) =\sat_2(n, B_2)= \floorfrac{n}{2}.
\]
For $k = 3$, $n$-vertex hypergraphs that are $B_3$-(semi)-saturated can be obtained by taking the union of disjoint hyperedges $K_3^{(3)}$, and one complete 3-uniform hypergraph with $4$ vertices $K_3^{(3)}$ if $3\mid n-1$ or a $S_{2,3}^{3}$ ($3$ hyperedges sharing a fixed pair of vertices) if $3\mid n-2$.

By the elementary observations on the components of the hypergraph (e.g. a component of order $n'$ has size at least $\ceil{\frac{n'-1}{2}}$), it is easy to deduce that those constructions are minimum examples and
\[
\wsat_3(n, B_3)=\sat_3(n,B_3) = \floorfrac{n}{3} +3\cdot \mathbbm{1}_{3|n-1}+2\cdot \mathbbm{1}_{3|n-2}.
\]
In this paper, we present two exact results for $4$-uniform hypergraphs. Specifically, we determine the exact values of the saturation and semi-saturation numbers for $4$-uniform bow ties. See~\cref{sec:main} for the overview of our results.
Note that determining the minimum size of a $B_k$-saturated hypergraph, which is a maximal $k$-uniform hypergraph without $B_k$,
answers a dual of a problem by Erdős and Sós~\cite{erdHos1975problems}.
They conjectured that the maximum number of hyperedges in a $k$-uniform hypergraph without a bow-tie $B_k$ is $\binom{n}{k-2}$, for $k\geq 4$ and $n$ large enough. This was solved by Frankl~\cite{frankl1977families} and strengthened for $k=4$ by Keevash, Mubayi and Wilson~\cite{KMW06}.

\subsection{Definitions and notation}\label{sec:def&not}

Let $\h$ be a hypergraph, we denote the vertex set of $\h$ by $V(\h)$ and the hyperedge set of $\h$ by $E(\h)$.
Let $v$ be a vertex of $\h$, then $\deg(v)$ denotes the degree of $v$, the number of hyperedges of $\h$ incident with $v$.
The neighborhood of $v$, the set of vertices adjacent to $v$, is denoted by $N(v)$; the closed neighborhood of vertex $v$ is denoted by $N[v]$ and is equal to $N(v)\cup \{v\}$.
A pair of vertices incident to the same set of hyperedges are called \emph{twin vertices}.
The hypergraph is twin-free if it has no twin vertices.

For integers $n\leq m$, we use $[n..m]$ to denote the set $\{n,n+1,\ldots,m\}$ and write $[n]=[1..n].$

The generalized Johnson graph $J:= J(n, k, 1)$ is a graph with vertex set $\binom{[n]}{k}$, where two vertices are adjacent if and only if the intersection of the corresponding $k$-sets contains exactly one element.
Note that a semi-saturated hypergraph corresponds to a dominating set of $J$, a set such that every edge of $J$ is incident to at least one vertex of the dominating set.
A saturated hypergraph corresponds to an independent dominating set of $J$, a set such that every edge of $J$ is incident to exactly one vertex of the set.

\section{Main results}\label{sec:main}
As we have seen in the introduction for $k \in \{2,3\},$ the values $\wsat_k(n, B_k)$ and $\sat_k(n, B_k)$ are easily determined, and are equal to $\frac{n}{k}+ O(1).$
Since there are no more than $k-1$ isolated vertices, $\wsat_k(n, B_k)$ and $\sat_k(n, B_k)$ are at least linear in $n$ for every $k.$
By considering the union of $K^{(k)}_{2k-2}$s and $K^{(k)}_{2k-3}$s, we know that the saturation number is linear for every $n$.
Here we show the asymptotic solution for semi-saturation for all $k$.
\begin{prop}
 For $k\geq 4$, we have 
 \[
 \frac{2n}{k}+O(1) \ge \wsat_k(n,B_k) \ge \frac{2n}{k+O(1)}.
 \]
\end{prop}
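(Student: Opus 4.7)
The plan is to prove the upper and lower bounds separately. For the upper bound $\wsat_k(n, B_k) \le \frac{2n}{k} + O(1)$, the approach is to exhibit an explicit construction averaging $\frac{2}{k}$ edges per vertex. Concretely, one would tile most of the vertex set by a bounded-size gadget in which every vertex lies in exactly two edges and every $k$-set (inside or crossing the tiling) is semi-saturated by some gadget edge, and patch the $O(k)$ residual vertices with a single small complete sub-hypergraph; verifying semi-saturation reduces to a finite case analysis on how the vertices of a non-edge may distribute across gadgets.

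For the lower bound, let $G$ be a $B_k$-semi-saturated hypergraph with $n$ vertices and $m$ edges, and let $n_j$ denote the number of vertices of degree $j$ in $G$. Summing degrees,
\[
 km \;=\; \sum_{v \in V(G)} \deg(v) \;\ge\; n_1 + 2(n - n_0 - n_1),
\]
which rearranges to $(k+1)m \ge 2n - 2n_0 - (n_1 - m)$. First, $n_0 \le k-1$, since $k$ isolated vertices would span a non-edge disjoint from every edge of $G$, contradicting semi-saturation. It thus suffices to show $n_1 - m \le O_k(1)$: granting this, the displayed inequality yields $m \ge \frac{2n}{k+1} - O_k(1) = \frac{2n}{k+O(1)}$.

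To bound $n_1 - m$, let $a_f$ denote the number of degree-$1$ vertices in edge $f$, so that $n_1 = \sum_f a_f$ and $n_1 - m = \sum_f (a_f - 1)$. A degree-$1$ vertex of $f$ lies in no other edge and is \emph{private} to $f$. Applying semi-saturation to any $k$-set $e = D \cup U$ with $D \subseteq f$ an $a_f$-subset of private vertices and $U \subseteq V \setminus f$ a $(k - a_f)$-set shows that, for $a_f \ge 2$, some edge $f' \ne f$ must satisfy $|f' \cap U| = 1$; when $a_f = k$ one takes $U = \emptyset$ and obtains an immediate contradiction, so no fully isolated edge exists, and a similar argument using two disjoint isolated edges and a $k$-set taking two vertices from each rules out even two such edges when $k \ge 4$. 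The main obstacle is to upgrade this to the general case: one must show, by a case analysis exploiting the mutual disjointness of private vertex sets across heavy edges together with the many semi-saturation conditions each heavy edge imposes on its complement, that the number of edges with $a_f \ge 2$ is bounded by a constant $C(k)$ depending only on $k$. Then $n_1 - m \le (k-1)\, C(k) = O_k(1)$, completing the proof.
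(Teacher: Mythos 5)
Both halves of your argument have genuine gaps. For the upper bound, the entire content of the proof is the existence of the ``bounded-size gadget in which every vertex lies in exactly two edges and every $k$-set \ldots is semi-saturated,'' and you never exhibit one. This is not a routine verification: the obvious candidate with the right edge density, the dual of $K_{k+1}$ (vertices are the edges of $K_{k+1}$, hyperedges are the stars), is \emph{not} $B_k$-semi-saturated --- e.g.\ for $k=4$ the four edges of a $4$-cycle in $K_5$ form a $4$-set meeting every star in $0$ or $2$ vertices. The paper's gadget is the dual of a $k$-regular graph of girth at least $k+1$; the girth condition is what guarantees that any $k$ chosen edges form a forest, hence have a leaf, hence some star meets them in exactly one element. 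Without identifying a gadget for which the ``finite case analysis'' actually closes, the upper bound is unproven.

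For the lower bound, your degree-counting frame is essentially the paper's, but you steer it toward a strictly stronger auxiliary claim --- that only $O_k(1)$ edges contain at least \emph{two} degree-one vertices --- which you yourself flag as ``the main obstacle'' and do not prove. This claim is doubtful for odd $k$: the contradiction you describe requires assembling a non-edge entirely from private degree-one vertices taken $2$ or $3$ at a time, and with only pairs available one cannot reach an odd $k$; adding a single auxiliary vertex $w$ fails because $w$ typically lies in an edge meeting your set in exactly one vertex, which is consistent with semi-saturation. The statement does not require this strength. The paper's (correct) move is to bound only the number of edges with at least \emph{three} degree-one vertices by $k-1$ (since every $k\ge 4$ is a sum of $2$'s and $3$'s, $k$ such edges would yield a non-edge meeting every edge in $0$, $2$ or $3$ vertices). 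This gives $n_1 \le 2\abs{E(H)} + O_k(1)$, hence $k\abs{E(H)} \ge 2n - O_k(1) - 2\abs{E(H)}$ and $\abs{E(H)} \ge \frac{2n}{k+2} - O(1)$, which is exactly of the form $\frac{2n}{k+O(1)}$. You should weaken your target accordingly rather than attempt to bound the edges with $a_f = 2$.
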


\begin{proof}
 First, we prove an upper bound.
 Consider a $k$-regular graph $G$ of girth at least $k+1$ (for existence, see e.g.~\cite{Sauer70}).
 Let $H_G$ be the dual of $G$.
 That is, $H_G$ is the hypergraph whose vertices are the edges of $G$, i.e., $V(H_G)=E(G)$, and whose hyperedges are the $k$-tuples of edges of $G$ sharing a fixed vertex.
 Let $H$ be an $n$-vertex $k$-uniform hypergraph obtained from $\floor{\frac{n}{\abs{V(H_G)}}}$ disjoint copies of $H_G$ and a $k$-uniform complete hypergraph with $n-\abs{V(H_G)}\floor{\frac{n}{\abs{V(H_G)}}}$ vertices.
 For each $h\in\binom{V(H)}{k}$, consider the $k$-edge subgraph of $G$ formed by the edges that correspond to the vertices of the hyperedge $h$.
 Since the girth of $G$ is large, this $k$-edge subgraph of $G$ is a forest, which means it contains a leaf vertex $v$ and a leaf edge $e$. 
 Therefore, the hyperedge $h$ intersects the hyperedge in $H$ corresponding to $v$ at exactly one vertex. Thus $H$ is $B_k$-semi-saturated, implying the desired upper bound $\abs{E(H)} \le \frac{2n}{k}+O(1).$

 In the other direction, as $k \geq 4$, fewer than $k$ hyperedges may have at least 3 vertices of degree one. Otherwise, one could add a hyperedge that contains exactly two or three degree-one vertices from such hyperedges, without creating a new copy of $B_k$. 
 This implies that 
\[
k\lvert E(H) \rvert= \sum_{v \in V(H)} \deg(v) \geq 2n - k^2 -2\lvert E(H) \rvert \Rightarrow \lvert E(H) \rvert \geq \frac{2n}{k + 2}-O(1). 
\]
\qedhere
\end{proof}

For $k=4$, we determine the exact semi-saturation number of $B_k$ for infinitely many $n$. The proof of the following theorem is detailed in the succeeding section,~\cref{sec:ssat}. The lower bound is established using an averaging argument involving a degree function, while the upper bound is achieved through a construction based on modifying the dual hypergraph of an auxiliary regular graph with a large girth. 
 
\begin{theorem}\label{thm:semi_sat}
For $n\ge 100$, $\wsat_4(n,B_4)\geq \frac{6}{13}n-\frac{11}{13}$ and this is sharp for $n \equiv 4 \pmod{13}$
\end{theorem}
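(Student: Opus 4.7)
The plan is to prove the lower bound $\wsat_4(n, B_4) \ge \frac{6n-11}{13}$ by a discharging argument and then realize sharpness via a disjoint-union construction. Define the per-vertex weight $w(v) := 13\deg(v) - 24$, so that $\sum_{v} w(v) = 52 m - 24 n$ and the desired inequality is equivalent to $\sum_v w(v) \ge -44$. Vertices of degree $0$ and $1$ carry deficits $-24$ and $-11$; vertices of degree $\ge 2$ carry surplus at least $+2$. The strategy is a local redistribution of charge in which loose and isolated vertices draw from nearby higher-degree vertices, so that only a constant slack remains --- a slack of exactly the size accounted for by a single $K_4^{(4)}$ component in the extremal construction.

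The structural ingredients forced by semi-saturation include analogues of what the preceding proposition already uses: (i) at most $3$ isolated vertices, because four isolated vertices form a missing $4$-set disjoint from every hyperedge, and (ii) at most one hyperedge contains two or more loose vertices, because from two such edges one could build a missing $4$-set intersecting each in exactly two vertices and avoiding the rest. These alone are insufficient, since three degree-$2$ partners of a loose vertex supply only $6$ units of surplus against a deficit of $11$. I expect the proof to invoke a refined second-order claim: if a loose vertex $v$ sits in an edge $e = \{v, a, b, c\}$ with $\deg(a) = \deg(b) = \deg(c) = 2$, then the second edges at $a, b, c$ are forced by semi-saturation (via missing $4$-sets of the form $\{v, x, y, z\}$) to jointly separate every pair of $V \setminus e$, yielding extra surplus at ``second-layer'' degree-$2$ vertices. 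The discharging is then carried out in two rounds: first each loose vertex extracts $11/3$ from each of its non-loose partners, and then any degree-$2$ vertex left in deficit recovers from the far end of its second edge. The hardest step, and exactly where the coefficient $6/13$ appears, is calibrating this two-round scheme so that the deficits balance up to the fixed $-44$ slack despite interactions with the single exceptional edge of (ii).

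For sharpness at $n \equiv 4 \pmod{13}$, let $k := (n-4)/13$ and take the disjoint union of $k$ copies of a $13$-vertex, $6$-edge hypergraph $H_{13}$ with one $K_4^{(4)}$; this has exactly $6k + 1 = (6n-11)/13$ edges. A case analysis on how a missing $4$-set distributes across the components shows that the disjoint union is $B_4$-semi-saturated provided that $H_{13}$ satisfies three conditions: (A) it is itself $B_4$-semi-saturated, (E) no vertex is isolated, and (D) no two vertices are twin (so every pair is separated by an edge). A missing $4$-set lying inside one component is handled by (A); a $1$--$3$ split across components is handled by (E); a $2$--$2$ split is handled by (D). A concrete $H_{13}$ with degree sequence $(2^{11}, 1^2)$ is obtained by placing two loose vertices in two disjoint edges $e_1, e_2$ and choosing the remaining four edges on the eleven degree-$2$ vertices using a pair-design whose pair graph avoids the $4$-cycles that would otherwise produce un-dominated missing $4$-sets. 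The main obstacle on the construction side is verifying (A), since a naive pairing typically fails on $K_{2,2}$-type configurations and a more careful triangle-based or projective-plane-inspired design is required.
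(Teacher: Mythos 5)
Your overall framework for the lower bound is sound and is essentially a vertex-discharging reformulation of the paper's edge-averaging argument (the paper assigns each hyperedge the weight $\phi(e)=\sum_{v\in e}1/\deg(v)$ and shows the average is at most $13/6$, which is the same arithmetic as your $w(v)=13\deg(v)-24$ bookkeeping). However, the proposal has a genuine gap exactly at the step you flag as ``the hardest step'': you never establish the structural claim that makes the second discharging round solvent. For an edge $e=\{v,a,b,c\}$ with $v$ loose and $\deg(a)=\deg(b)=\deg(c)=2$, the edge carries a net deficit of $5$, and you must certify that the second edges at $a,b,c$ have enough surplus to cover it. The paper's \cref{clm:neighbouring_edges_withlow_phi} does precisely this: if two of those second edges each contained a degree-one vertex, the $4$-set formed by those two degree-one vertices together with the two corresponding degree-$2$ partners would meet every edge of $H$ in $0$ or $2$ vertices, contradicting semi-saturation; hence at least two of the three second edges have all degrees at least $2$. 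One also needs the paper's further observation that no edge is charged by two different deficient edges (again via an explicit un-dominated $4$-set), plus a separate accounting for the at most three ``special'' edges containing low-degree twin pairs. Your proposed substitute (``the second edges jointly separate every pair of $V\setminus e$'') is not proved, is not obviously true, and is not what delivers the constant $13/6$; without these claims the calibration you defer cannot be carried out.

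The construction half also has a gap. You assert a concrete $13$-vertex, $6$-edge, twin-free $B_4$-semi-saturated block $H_{13}$ with degree sequence $(2^{11},1^2)$, but you do not exhibit it, and there is reason for doubt: the natural way to realize this degree sequence (as in the paper's \cref{clm:SharpCon}) is as the dual of a $4$-regular girth-$5$ graph modified at one vertex, which for a single block would require a $4$-regular girth-$5$ graph on $5$ vertices --- impossible by the Moore bound. The paper instead builds connected blocks on $13r$ vertices with $6r$ edges only for $r\in\{4,5,6\}$ (from $4$-regular girth-$5$ graphs on $5r$ vertices whose squares have independence number $r$), and covers all $n\equiv 4\pmod{13}$, $n\ge 108$, by combining these three block sizes with one isolated $K_4^{(4)}$. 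Your case analysis for why a disjoint union of semi-saturated, twin-free, isolated-vertex-free components plus one $K_4^{(4)}$ is semi-saturated is correct and matches the paper's (implicit) reasoning, but the existence of the building block is the actual mathematical content and is left unverified.
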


We also determine the saturation number of $B_4$ for every $n$. One can compare these values for the minimum size of $B_4$-saturated hypergraphs with the corresponding maximum size obtained in~\cite[Thm.~1.1]{KMW06}.

\begin{theorem}\label{thm:b_4_sat}
 \[
  \sat_4(n,B_4)= \begin{cases}
 \binom{n}{4} &\mbox{ if } n\leq 6,\\
 n &\mbox{ if } n \in \{7,8\},\\
 n-3 &\mbox{ if } n\geq 9.
 \end{cases}
 \]
\end{theorem}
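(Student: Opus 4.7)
The plan is to split the proof by cases matching the three regimes of the theorem.

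For $n \le 6$ there is essentially nothing to do: $B_4$ has $2k-1 = 7$ vertices, so every $4$-uniform hypergraph on $\le 6$ vertices is vacuously $B_4$-free, and saturation therefore forces maximality, giving $K_n^{(4)}$ with $\binom{n}{4}$ hyperedges. For $n \in \{7, 8\}$, I would exhibit explicit constructions with $n$ hyperedges and prove matching lower bounds by case analysis on degree sequences and intersection patterns. A clean construction for $n = 7$ is the Fano-complement design, consisting of the seven $4$-subsets $L^c$ where $L$ ranges over the lines of the Fano plane on $[7]$: any two such $4$-subsets intersect in exactly two vertices (so the hypergraph is $B_4$-free), and any other $4$-subset contains some Fano line $L$, which creates a bow tie with $L^c$ when added. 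A similar $8$-edge construction handles $n = 8$.

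For $n \ge 9$, the upper bound $n - 3$ is witnessed by a family of constructions. The cleanest case is $n = 9$: take the disjoint union $K_4^{(4)} \cup K_5^{(4)}$. The $1 + 5 = 6$ hyperedges pairwise intersect in $0$ or $3$ vertices (so $B_4$-freeness holds), and saturation follows from a short case analysis: a missing $4$-set $h$ with $|h \cap \{1,2,3,4\}| = 1$ forms a bow tie with $\{1,2,3,4\}$, while $|h \cap \{1,2,3,4\}| \in \{2, 3\}$ forces a bow tie with the unique $K_5^{(4)}$-hyperedge missing an appropriately chosen vertex of $h \cap \{5,\ldots,9\}$. For $n > 9$ the construction is adjusted according to $n \bmod 5$: for $n \equiv 4 \pmod{5}$ one takes several disjoint copies of $K_5^{(4)}$ together with one $K_4^{(4)}$ on $5s + 4 = n$ vertices, yielding $5s + 1 = n-3$ hyperedges, and for the remaining residues one augments this skeleton with a small saturated design (e.g.\ the Fano-complement design on $7$ vertices) or a few tailored extra hyperedges to fit the vertex count.

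The main obstacle is the lower bound $\sat_4(n,B_4) \ge n - 3$ for $n \ge 9$. My plan is: (i) show that any $B_4$-saturated hypergraph has at most three isolated vertices, since otherwise the $4$-set of any four isolated vertices could be added without creating a bow tie; (ii) exploit $B_4$-freeness (any two hyperedges meet in $0$, $2$, or $3$ vertices) to constrain the possible component structure --- in particular, a $K_5^{(4)}$-subhypergraph must be decoupled from the rest, because any cross-edge meeting it in $1$ or $2$ vertices automatically creates a bow tie with one of its $K_5^{(4)}$-hyperedges; (iii) carry out a discharging or averaging argument that assigns enough weight to each vertex to conclude $|E| \ge n - 3$. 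Step (iii) is the most delicate, since one must rule out sparse arrangements in which many hyperedges overlap pairwise in twos and threes, threatening to drop just below the bound.
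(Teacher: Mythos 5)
Your treatment of $n\le 6$ is correct, and your constructions for $n=7$ and $n=9$ coincide with the paper's ($FP^c$ and $K_4^{(4)}\cup K_5^{(4)}$). But the proposal has genuine gaps. On the upper bound for general $n\ge 9$: to reach $n-3$ you need exactly one component that is three edges short of its order and all other components with exactly as many edges as vertices, and at most one component may contain twin vertices (otherwise the union of two twin pairs from different components could be added without creating a bow tie) --- so two disjoint $K_4^{(4)}$'s are not allowed. Disjoint unions of one $K_4^{(4)}$ with copies of $K_5^{(4)}$ and $FP^c$ only realize $n=4+5a+7b$, which misses $n=10,12,13,15,17,20,22,27$; your ``a few tailored extra hyperedges'' is exactly the missing content (the paper instead builds an explicit connected $7$-edge hypergraph on $10$ vertices and, for $n\ge 12$, attaches pendant hyperedges $\{1,2,3,v\}$ and $\{8,9,10,w\}$ to it). Also, your step (ii) is false as stated: a hyperedge may meet a $K_5^{(4)}$ in \emph{three} vertices without creating a bow tie (e.g.\ inside $K_6^{(4)}$), so $K_5^{(4)}$-subhypergraphs need not be decoupled from the rest.

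More seriously, step (iii) --- the lower bound $\sat_4(n,B_4)\ge n-3$ --- is the heart of the theorem, and what you offer is a placeholder rather than an argument. A local averaging or discharging bound cannot reach $n-3$: $B_4$-freeness only forces pairwise intersections in $\{0,2,3\}$, so a connected $B_4$-free hypergraph on $v$ vertices can a priori have as few as roughly $v/2$ hyperedges (each new hyperedge contributing two new vertices), and the paper's own averaging argument for the \emph{easier} semi-saturation problem only yields about $\tfrac{6}{13}n$. The paper's proof of the saturation bound (Lemma~\ref{Lem:core}) is global and structural: it greedily builds a spanning subhypergraph, shows that each step adding two vertices creates a twin pair, organizes these pairs into an auxiliary tree $T$ with a core $C(T)$ of genuine twin pairs of $H$, charges non-core vertices to hyperedges, extracts the $-3$ from $\binom{|C(T)|}{2}\ge 2|C(T)|-3$ applied to the unique twin-containing component, and finishes the twin-free cases $T\in\{P_3,P_4,S_4\}$ by computer verification. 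You would need to supply an argument of comparable depth. Finally, you give no lower-bound argument at all for $n=8$; the paper resorts to an exhaustive computation of minimum maximal independent sets in $J(8,4,1)$ for that case, which suggests no short hand proof is available there either.
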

Note that if $n<7$, Theorem~\ref{thm:b_4_sat} trivially holds.
The case $n=7$ is elementary and elegant. The $n=8$ case was checked by a computer program. These proofs are presented next.
The lower bound for Theorem~\ref{thm:b_4_sat} for $n \ge 9$ will be deduced from~\cref{Lem:core} (proven in~\cref{sec:b_4_sat}) in~\cref{cor:sat}. Both are stated at the end of this section.

\begin{proof}[Proof of~\cref{thm:b_4_sat} for $n=7$ and $n=8$]
 We first prove that $\sat_4(7, B_4) \geq 7$. Assume, for contradiction, that $\sat_4(7, B_4) < 7$. This implies there are at most $6$ hyperedges. Since each hyperedge is part of 4 bow ties in $K_7^{(4)}$ there must exist a set of four vertices that do not form a hyperedge and is not part of a bow tie with any of the 6 hyperedges, as $\binom{7}{4} - 6 - 4 \cdot 6 > 0$, a contradiction.

 Next, we show that there exists a unique $4$-uniform $B_4$-saturated $7$-vertex hypergraph.
 Following the previous argument, since $7+7\cdot 4=\binom{7}{4},$ no two hyperedges participate in a bow tie with the same missing hyperedge. 
 For two distinct hyperedges $A$ and $B$ we have
 $\abs{A \cap B}\neq 3,$ equivalently
 $\abs{A^c \cap B^c}\neq 2,$ since they both form a bow tie with $(A \cap B)^c$;
 $\abs{A \cap B}\neq 1,$ equivalently $\abs{A^c \cap B^c}\neq 0,$ since they form a bow tie.
 Thus, for every pair of distinct hyperedges $A$ and $B$, we have $\abs{A^c \cap B^c} = 1$. Consequently, the set consisting of the complements of the hyperedges forms a Steiner system $S(2,3,7)$, the Fano Plane $FP$. 
 We denote the unique $B_4$-saturated hypergraph of order and size $7$ by $FP^c.$


 Here we show $\sat_4(8,B_4)\leq 8$, the hypergraph $H$ satisfying 
 \[
 V(H)=[8]\mbox{ and } E(H)=\left\{A \in \binom{[6]}{4} \colon \abs{A \cap \{1,2,3,4\}} \in \{2,4\} \right\} \cup \{\{5,6,7,8\}\},
 \]
 is an $8$-vertex $4$-uniform $B_4$-saturated hypergraph of size $8$.
 Hypergraph $H$ has three vertex orbits; $[4],\{5,6\}$ and $\{7,8\}$ and thus $\frac{8!}{4! \cdot 2! \cdot 2!}=420$ automorphisms.
 Since there are $420$ minimum size maximal independent sets (of size $8$) in $J(8,4,1)$, as computed in \cite[doc. \text{sat(8,4)}]{C24github}, we conclude that $\sat_4(8,B_4)= 8$ and $H$ is the unique $B_4$-saturated hypergraph of order and size~$8$. \qedhere
\end{proof}

\begin{figure}[h]
 \centering
 \begin{tikzpicture}[scale=0.99]
 \foreach \x in {45,135,...,315}{\draw[fill] (\x:1.5) circle (0.15);
}
\draw[dashed] (0,0) circle (1.75);

 \foreach \x in {-2,2}{
 \foreach \y in {-1.5,1.5}{
 \draw[fill] (\x,\y) circle (0.15);
}
}

 \foreach \x in {-2.5,2.5}{
 \draw[fill] (\x,0) circle (0.15);
}

\draw[dotted] (4,0.7)--(4,-0.7);

\draw[dashed] (0.7,1.75) -- (2.25,1.75)--(2.25,-1.75)--(0.7,-1.75) -- cycle;

\draw[dashed] (-0.7,1.75) -- (-2.25,1.75)--(-2.25,-1.75)--(-0.7,-1.75) -- cycle;

\draw[dashed, color=blue] (0.8,1.5) -- (2.75,0.5)--(2.75,-2.25)--(0.8,-1.25) -- cycle;

\draw[dashed, color=blue] (0.8,-1.5) -- (2.75,-0.5)--(2.75,2.25)--(0.8,1.25) -- cycle;

\draw[dashed, color=blue] (-0.8,1.5) -- (-2.75,0.5)--(-2.75,-2.25)--(-0.8,-1.25) -- cycle;

\draw[dashed, color=blue] (-0.8,-1.5) -- (-2.75,-0.5)--(-2.75,2.25)--(-0.8,1.25) -- cycle;

\foreach \y in {-1.5,1.5}{
 \draw[fill] (4,\y) circle (0.15);
}

\draw[dashed, color=red] (1.75,2.25) -- (1.75,-2)--(4.5,-1.65) -- cycle;

\draw[dashed, color=red] (1.75,-2.25) -- (1.75,2)--(4.5,1.65) -- cycle;

 \end{tikzpicture} 
 
 \caption{Saturated $B_4$-free hypergraph of order $n \ge 12$ and size $n-3$}\label{fig:Sat_n_ge12}
 \end{figure}
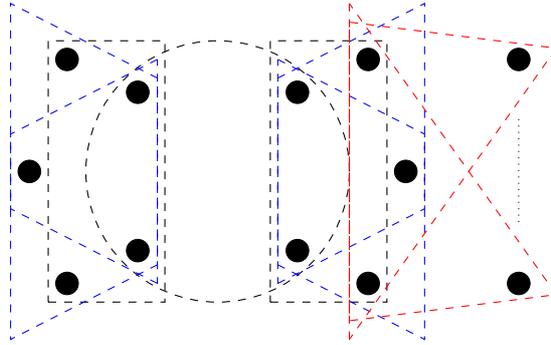

\begin{proof}[Proof of the upper bound~\cref{thm:b_4_sat} for $n \ge 9$]
For $n\in \{9,11\}$, we take $K_{4}^{(4)} \cup K_{5}^{(4)}$ and $K_{4}^{(4)} \cup FP^c. $
 For $n=10$, define $H$ by
 $V(H)=[10]$ and $$E(H)=\left\{A \in \binom{[5]}{4} \colon \abs{A \cap \{1,2,3\}}=2 \right\} \cup \{4,5,6,7\} \cup \left \{A \in \binom{[6..10]}{4} \colon \abs{A \cap \{8,9,10\}}=2 \right\}.$$
 For $n \geq 12$, partition the vertex set $[n]$ into three sets $[10], V,$ and $W$, where $\abs{V}, \abs{W} \neq 1$. Consider the hypergraph consisting of the hyperedges $\{1,2,3,v\}$ and $\{8,9,10,w\}$, where $v \in V$ and $w \in W$, along with the hyperedges of $H$, which is the construction on 10 vertices. This forms an $n$-vertex, size $n-3$, $B_4$-saturated hypergraph. The construction is illustrated for $n=12$ and $V = \emptyset$ in~\cref{fig:Sat_n_ge12}.

This concludes the proof of the upper bound for $\sat_4(n, B_4)$ for all $n$.
\end{proof}

The following lemma is a key tool for proving the matching lower bound for $\sat_4(n, B_4)$, which we prove in Section~\ref{sec:b_4_sat}.

\begin{lemma}\label{Lem:core}
 Let $H$ be a connected $B_4$-free, $B_4$-saturated $4$-uniform hypergraph of order $n$. Then $\abs{E(H)} \geq n-3$. When equality holds, there exists a $B \in \binom{[n]}{3}$ such that no $A \in E(H)$ satisfies $\abs{A \cap B} = 1$. If $H$ is twin-free and $n \geq 5$, the stronger inequality $\abs{E(H)} \geq n$ holds.
\end{lemma}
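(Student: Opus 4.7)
The plan is to combine a spanning-tree / ``new vertex per hyperedge'' bound coming from $B_4$-freeness with the saturation condition, and then to squeeze both to extract the distinguished $3$-element set $B$ in the equality case. Two basic observations underlie the argument: since $H$ is $B_4$-free, any two hyperedges $e,f \in E(H)$ satisfy $\abs{e\cap f} \in \{0,2,3\}$; and since $H$ is connected (and $4$-uniform), every vertex lies in some hyperedge and the hyperedges can be ordered $e_1,\dots,e_m$ so that each $e_i$ ($i\ge 2$) intersects $e_1\cup\dots\cup e_{i-1}$ in at least two vertices. Each $e_i$ ($i\ge 2$) therefore contributes at most $2$ new vertices, yielding the preliminary bound $n\le 2m+2$, i.e., $m\ge (n-2)/2$.

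To sharpen this to $m\ge n-3$, I would invoke saturation. By the characterisation from~\cref{sec:def&not}, every 4-set $f\notin E(H)$ must share exactly one vertex with some $g \in E(H)$. Applying this to 4-sets of the form $\{u,v,x,y\}$, where $\{u,v\}$ is a pair contained in many hyperedges (a ``central pair'') and $\{x,y\}$ is chosen from outside the current cluster, forces the existence of hyperedges whose four vertices already lie in the union of previously placed edges --- these are the ``internal'' hyperedges completing triangles on a common pair, exactly as in the 10-vertex construction. A case analysis based on the maximum pair-degree $D = \max_{u,v} d(u,v)$ converts these local forcings into a global surplus that compensates for the factor $2$ gap in the spanning-tree count, giving $m \ge n-3$.

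In the equality case $m = n-3$ all of the inequalities above must be tight; tracing through the ordering and the saturation-forced edges shows that every hyperedge plays one of only two roles: either a ``petal'' $\{p,q,b_i,b_j\}$ on a central pair $\{p,q\}$ with $\{b_i,b_j\}\subset B$ for some triple $B$, or a ``bridge/external'' edge disjoint from $B$. In both cases $\abs{A\cap B}\in\{0,2,3\}$, so no $A\in E(H)$ satisfies $\abs{A\cap B}=1$, which is the desired conclusion. For the twin-free strengthening I would observe that such an extremal configuration inevitably contains twin vertices: the two ``petal'' vertices of each cluster (for instance, vertices $4$ and $5$ in~\cref{fig:Sat_n_ge12}) are indistinguishable by edge membership. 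Requiring twin-freeness thus breaks each cluster, forcing at least one extra hyperedge per cluster; summing these surpluses gives $m \ge n$ for $n\ge 5$.

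The main obstacle is the saturation-forcing step: translating the local statement ``every non-edge $f$ shares exactly one vertex with some $g \in E(H)$'' into a precise surplus of internal hyperedges that brings the bound from $(n-2)/2$ up to $n-3$. This requires carefully handling the case where adjacent hyperedges share a triple (contributing only one new vertex per step) as well as the case of several overlapping clusters with a shared central pair, and pinning down precisely which slack in the spanning-tree bound is absorbed by the triple $B$ in the equality case is a subtle point of the analysis.
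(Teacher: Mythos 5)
Your opening observations are correct and match the paper's starting point: $B_4$-freeness forces $\lvert e\cap f\rvert\in\{0,2,3\}$ for any two hyperedges, and connectivity yields an ordering in which each hyperedge after the first contributes at most two new vertices, hence $m\ge (n-2)/2$. But this is only half of the claimed bound, and closing that factor-of-two gap is the entire content of the lemma. Your proposal leaves precisely that step unexecuted: the passage from $(n-2)/2$ to $n-3$ is described only as ``a case analysis based on the maximum pair-degree $D$'' that ``converts these local forcings into a global surplus,'' and you yourself flag it as ``the main obstacle.'' No mechanism is given for why saturation forces enough internal hyperedges, nor for where the constant $-3$ comes from. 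In the paper this is done by building an auxiliary tree $T$ on the pairs of vertices that are added two at a time, isolating the core $C(T)$ of genuine twin pairs (saturation forces the union of any two core pairs to be a hyperedge, giving $\binom{\lvert C(T)\rvert}{2}$ internal hyperedges, and the inequality $\binom{c}{2}-2c\ge -3$ is exactly the source of the $-3$), proving a structural claim that every non-core vertex of $T$ is a leaf or adjacent to one, and then charging at least one hyperedge to each vertex outside the core. None of this is visible in your sketch, and your ``central pair'' heuristic does not obviously reconstruct it.

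The two remaining assertions are likewise unproven. The existence of the triple $B$ in the equality case is asserted via a claimed ``petal/bridge'' dichotomy that is never derived; in the paper $B$ is extracted explicitly from the case analysis of the components of $T\setminus C(T)$ (e.g.\ $B=\{a,u,v\}$ or $B=\{a,b,c\}$ depending on whether some hyperedge meets a leaf pair and its neighbour in three vertices). The twin-free strengthening $m\ge n$ is the hardest part of the lemma: once $C(T)=\emptyset$ the tree $T$ is forced to be one of $P_3$, $P_4$, $S_4$, and the bound is verified by analysing maximal independent sets in subgraphs of the Johnson graph $J(8,4,1)$, partly by computer. Your one-sentence argument (``extremal configurations inevitably contain twins, so twin-freeness forces an extra hyperedge per cluster'') assumes the structure of the extremal configurations that you have not established, and so is circular. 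As it stands the proposal is a plausible plan with the decisive steps missing, not a proof.
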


The key argument in the proof of this lemma involves studying a maximum tree $T$ such that the graph $H_j$ defined by $V(H_j)=\{v_i, v'_i \mid v_i \in V(T)\}$ and 
$E(H_j)=\{ (v_i, v'_i, v_p, v'_p) \mid v_iv_p \in E(T) \}$ is a subhypergraph (after relabeling) of $H.$ Here $H_j$ is a kind of two-fold blowup of $T$.
Also, we will use the core $C(T) \subset T$ containing all $v_i \in V(T)$ for which $\{v_i,v'_i\}$ is a pair of twin vertices in $H$.

As a corollary of~\cref{Lem:core}, we conclude with the proof for the lower bound of~\cref{thm:b_4_sat}, which finishes the proof of the whole theorem.

\begin{cor}\label{cor:sat}
 For every $n \ge 4$, $\sat_4(n,B_4) \ge n-3$.
\end{cor}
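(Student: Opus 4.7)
The plan is to reduce~\cref{cor:sat} to~\cref{Lem:core} by decomposing $H$ into connected components and controlling the aggregate ``discount'' $\sum_i(n_i - |E(H_i)|)$ by means of cross-component saturation constraints. If $H$ is connected,~\cref{Lem:core} delivers the bound immediately, so assume $H$ has at least two components.

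First I will record the structural facts. Because every hyperedge has four vertices, each component has order $1$ or at least $4$, and any order-$4$ component must be exactly the single hyperedge $K_4^{(4)}$. Moreover, any missing hyperedge contained in a single component $V(H_i)$ has to be $B_4$-completed by a hyperedge that shares a vertex with it and therefore also lies in $H_i$; hence each non-trivial component is itself $B_4$-free and $B_4$-saturated, and~\cref{Lem:core} applies to it separately.

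Next I will extract three constraints from missing hyperedges that cross components. (i) A missing hyperedge on four isolated vertices cannot be $B_4$-completed, so there are at most three isolated vertices. (ii) A $2{+}2$ missing hyperedge whose halves are twin pairs in two different components admits no $B_4$-completion, so at most one component of $H$ contains a pair of twin vertices; this forbids two isolated vertices to coexist with any other twin-bearing component and forbids two $K_4^{(4)}$ components altogether. (iii) If $H_i$ is a non-trivial component attaining the equality $|E(H_i)| = n_i - 3$ in~\cref{Lem:core}, the triple $B \subset V(H_i)$ guaranteed by the equality case, together with any isolated vertex $v$, gives a missing hyperedge $B \cup \{v\}$ without a $B_4$-completion. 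Hence no equality-case non-trivial component can coexist with an isolated vertex; in particular no $K_4^{(4)}$ component can.

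Finally I will set $d(H_i) := n_i - |E(H_i)|$ and verify $\sum_i d(H_i) \leq 3$ by case analysis on the number $|X|$ of isolated vertices and the number $c_4$ of $K_4^{(4)}$ components. The twin-free clause of~\cref{Lem:core} yields $d(H_i) \leq 0$ for twin-free components of order $\ge 5$, isolated vertices contribute discount $1$ each, and $K_4^{(4)}$'s contribute $3$. The easy cases are $|X| \geq 2$ (no other twin-bearing component by (ii), total discount at most $|X| \leq 3$) and $|X| = 0$ (at most one component with twins, of discount $\leq 3$, the rest twin-free with discount $\leq 0$). The delicate case, which I expect to be the main obstacle, is $|X| = 1$ together with a twin-bearing non-trivial component, since naively the discount is $1 + 3 = 4$; here constraint (iii) kicks in and forces the twinned component's discount to be at most $2$, so the total is $1 + 2 = 3$. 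Collecting all cases gives $|E(H)| \geq n - 3$, proving the corollary.
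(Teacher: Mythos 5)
Your proposal is correct and follows essentially the same route as the paper: split $H$ into components, apply \cref{Lem:core} to each, use the twin-pair cross-component argument to show only one component can be deficient, and invoke the equality-case triple $B$ to rule out a deficit-$3$ component coexisting with an isolated vertex. Your bookkeeping via the discount $d(H_i)$ and the explicit case split on $|X|$ is just a more systematic write-up of the paper's argument.
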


\begin{proof}
 Let $H$ be a $B_4$-free $B_4$-saturated $4$-uniform hypergraph with $n$ vertices. Then so are each of its components.
 This implies that at most one of its components can contain twin vertices; otherwise, a hyperedge could be added that includes twin vertices from two components without creating $B_4$, a contradiction.
 Thus, by Lemma~\ref{Lem:core}, all components except for one (distinct from isolated vertex component) have at least as many edges as vertices.
 Thus if there are no isolated vertices we conclude.
 
 If there are $2$ or $3$ isolated vertices, no component contains twin vertices and we conclude. 
 If there is one isolated vertex $v$ and a component $C$ with $\abs{E(C)}=\abs{V(C)}-3,$ the triple $B \subset V(C)$ from Lemma~\ref{Lem:core} extended with $v$ can be added as an edge to $H$ without creating a bow tie, a contradiction.
Hence in all cases, we have $\abs{E(H)} \ge n-3$.
\end{proof}

Note that Corollary~\ref{cor:sat}, together with the constructions provided above, concludes the proof of Theorem~\ref{thm:b_4_sat}. The proof of Lemma~\ref{Lem:core} is presented in Section~\ref{sec:b_4_sat}.





\section{Semi-saturation of the 4-uniform bow tie}\label{sec:ssat}

 \begin{proof}[Proof of upper bound for Theorem~\ref{thm:semi_sat}]

\begin{claim}\label{clm:SharpCon}
 For every $r \in \{4,5,6\}$, there exists a $B_4$-semi-saturated connected twin-free $4$-uniform hypergraph with $n=13r$ vertices and $6r=\frac{6}{13}n$ hyperedges.
 \end{claim}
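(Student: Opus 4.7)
The plan is to construct, for each $r \in \{4, 5, 6\}$, an explicit $4$-uniform hypergraph $H_r$ on $n = 13r$ vertices with exactly $6r$ hyperedges that is connected, twin-free, and $B_4$-semi-saturated. The three base values matter because every integer $r \ge 8$ is a non-negative combination $4a + 5b + 6c$, so disjoint unions of the $H_r$'s together with extra $K_4^{(4)}$ blocks then realize the sharpness of \cref{thm:semi_sat} for all large $n \equiv 4 \pmod{13}$. Counting imposes a rigid structure: since $\sum_v \deg(v) = 24r$ on $13r$ vertices, the average degree is $24/13$, and in the most efficient case (all degrees in $\{1,2\}$) the degree sequence is forced to be exactly $2r$ vertices of degree $1$ and $11r$ vertices of degree $2$.

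The construction will refine the dual-of-a-$4$-regular-graph approach from the proof of the proposition above, which for $G$ $4$-regular of girth $\ge 5$ on $m$ vertices yields a dual $H_G$ with $2m$ vertices and $m$ hyperedges, giving ratio $1/2$. For $r \in \{4, 6\}$ I would start from a $4$-regular graph of girth $\ge 5$ on $13r/2 \in \{26, 39\}$ vertices---such graphs exist for these sizes---and delete exactly $r/2 \in \{2, 3\}$ carefully chosen hyperedges of $H_G$, obtaining $13r$ vertices and $6r$ hyperedges. For $r = 5$, where $13r/2$ is not an integer, this direct dual modification is unavailable, and a bespoke construction is needed: for instance, an auxiliary graph with a small handful of non-degree-$4$ vertices (contributing no hyperedges in a modified dual), or an ad hoc design on $65$ points with $30$ blocks.

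Verification splits into three pieces. Connectivity follows from the connectivity of $G$ and is preserved by careful deletions; twin-freeness follows from the simplicity of $G$, since two distinct edges of $G$ (the vertices of $H_G$) never share both endpoints; and semi-saturation adapts the key argument from the proof of the proposition. Any $4$-set $A$ of vertices not already a hyperedge of $H_r$ corresponds to a $4$-edge subgraph of $G$ which, by the girth-$5$ assumption, must be a forest, and hence contains a leaf edge whose endpoint in $G$ supplies a hyperedge of $H_G$ sharing exactly one vertex with $A$.

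The main obstacle is that when this witnessing hyperedge has been deleted, an alternative witness must be produced. The deletions must therefore be chosen so that, for every $4$-edge forest in $G$, not all of its leaves lie in the deletion set; the tightest sub-case is a $4$-edge path $P_5$ (which has only two leaves), so the deleted vertices must not simultaneously be the two endpoints of any such path. Translating this into a finite combinatorial check on $G$ and exhibiting a valid deletion set is the essential computation. The $r = 5$ case is expected to be the most delicate, since the clean $4$-regular framework does not apply and the hypergraph likely has to be constructed and verified directly.
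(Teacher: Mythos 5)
Your high-level framing (the degree count forcing $2r$ degree-one and $11r$ degree-two vertices, a dual of a girth-$5$ auxiliary graph, and the forest/leaf witness argument for semi-saturation) matches the paper's, but your actual construction diverges from it and, as written, does not go through. The paper does not delete hyperedges from the exact dual of a $4$-regular graph. It starts from a $4$-regular girth-$5$ graph on $5r$ vertices whose \emph{square} has independence number $r$, splits each of the $r$ pairwise-distant vertices $u_i$ into two adjacent degree-$3$ vertices $v_i,w_i$, and then pads the triple of edges at $v_i$ (resp.\ $w_i$) with the vertex $v_i$ (resp.\ $w_i$) itself to form a $4$-set. In the resulting hypergraph every edge of the auxiliary graph still has degree $2$; the $2r$ degree-one hypergraph vertices are the new graph vertices $v_i,w_i$, not orphaned edges, so the leaf-witness argument survives with no deletions to repair. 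This also handles $r=5$ uniformly, a case your plan leaves entirely open.

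Your deletion scheme manufactures exactly the obstruction you identify and then leaves it unresolved, and the obstruction is almost certainly unavoidable. Deleting the hyperedge at a vertex $u$ orphans the four edges at $u$, and as you note, semi-saturation then requires that no two deleted vertices be the endpoints of a common path of length $4$ in $G$ (one also needs deleted vertices to be pairwise non-adjacent and to share no neighbour, or isolated and twin hypergraph vertices appear). But in a $4$-regular girth-$5$ graph every vertex is the origin of $4\cdot 3^3=108$ non-backtracking walks of length $4$, all of which are genuine paths because the girth forbids any revisit, and their endpoints land among only $25$ (for $r=4$) or $38$ (for $r=6$) other vertices. With average multiplicity above $2.8$ per vertex it is to be expected that every pair of vertices is joined by some $4$-edge path, so no admissible deletion pair or triple exists; at the very least you have exhibited none, and the ``essential computation'' you defer is the entire content of the claim. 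Together with the missing $r=5$ construction, this is a genuine gap rather than a routine verification.
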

 \begin{claimproof}
 \begin{figure}[h]
\centering
\begin{tikzpicture}
 \foreach \x in {0,1,2,3}{
 \draw[thick] (0,0)--(90*\x+45:1.5);
 \draw[fill=black] (90*\x+45:1.5) circle (3pt);
 }
 \draw[fill=black] (0,0) circle (3pt);
 \node (u) at (0,0.3) {$u_i$} ;
 \end{tikzpicture}\quad
 \begin{tikzpicture}
 \foreach \x in {0,1}{
 \draw[thick] (-0.75,0)--(60*\x+150:2.12);
 \draw[fill=black] (60*\x+150:2.12) circle (3pt);
 }
 \foreach \x in {0,1}{
 \draw[thick] (0.75,0)--(60*\x-30:2.12);
 \draw[fill=black] (60*\x-30:2.12) circle (3pt);
 } 
 \draw[thick] (-0.75,0)--(0.75,0);

\draw[fill=black] (-0.75,0) circle (3pt);
\draw[fill=black] (0.75,0) circle (3pt);
 \node (u) at (-0.7,0.3) {$v_i$} ;
 \node (u) at (0.7,0.3) {$w_i$} ;
 
 \end{tikzpicture}
 
 \caption{Local modification used in~\cref{clm:SharpCon}}
 \label{fig:connecting2graphs}
\end{figure}
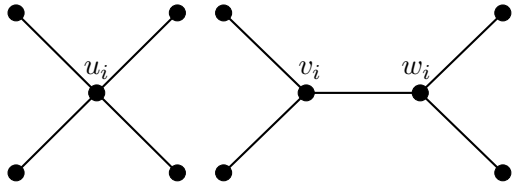
 Take the graph from~\url{https://houseofgraphs.org/graphs/50403}~\cite{HOG}, or the ones with HoG id $1138$ or $33766$. 
 These are $4$-regular graphs of order $5r$ (with $r \in \{4,5,6\}$), girth $5$ and whose square graph has independence number~$r$ (verified in \cite[{doc. 4-regular girth 5 construction}]{C24github}). 
 That is, its vertex set can be partitioned in $r$ disjoint closed neighborhoods $(N[u_i])_{i \in [r]}$.
 Now delete each vertex $u_i$, $i \in [r]$ and replace them with two vertices $v_i, w_i$, that are adjacent to each other and two distinct pairs of vertices from $N(v_i)$, see Figure~\ref{fig:connecting2graphs}. 
 The resulting graph $G$ has $6r$ vertices and $11r$ edges. 
 Now construct the $4$-uniform hypergraph $H$ for which 
 \[
 V(H)=E(G)\cup \{v_i\}_{i \in [r]} \cup \{w_i\}_{i \in [r]} \] 
 and the hyperedge set is defined in the following way.
 \begin{itemize}
 \item For every vertex $x$ of degree $4$ in $G$, the set containing four edges incident with $x$ in $G$ is a hyperedge of $H$.
 \item For every vertex $x \in \{v_i, w_i\}, i \in [r]$, of degree $3$ in $G$, the set containing three edges incident with $x$ in $G$, together with $x$ is a hyperedge of $H$. 
 \end{itemize}
 The $4$-uniform hypergraph $H$ has $13r$ vertices, $11r$ vertices of degree $2$ (corresponding to the edges of $G$), and $2r$ degree one vertices (corresponding to the degree three vertices of $G$), and $6r$ hyperedges.
 The hypergraph $H$ is connected and twin-free, since no two vertices of degree one are adjacent and every degree two vertex is a center of an induced bow tie.
 As the girth of $G$ is at least $5$, the hypergraph $H$ is $B_4$-saturated. 
 \end{claimproof}

 Finally, for every $n\geq 108$ with $n\equiv 4 \pmod{13}$, there is a hypergraph $H_n$, consisting of disjoint copies of the hypergraphs from \cref{clm:SharpCon} and an isolated hyperedge.
 The hypergraph $H_n$ is a $B_4$-semi-saturated hypergraph 
 with $n$ vertices and $\frac{6}{13}(n-4)+1 =\frac{6n-11}{13}$ hyperedges.\end{proof}

\begin{proof}[Proof of lower bound for Theorem~\ref{thm:semi_sat}]
Let $H$ be a saturated $4$-uniform hypergraph.
We refer to hyperedges containing a pair of twin vertices of degree one or two, as \textit{special}.
The other hyperedges are referred to as \textit{normal}.
Since adding the union of two pairs of twin vertices does not create a bow tie, each such union has to belong to $H$, and hence $H$ has no more than $3$ special hyperedges.
 
 For every hyperedge $e\in E(H)$ we define 
$ \phi(e)=\sum_{v \in e} \frac{1}{\deg(v)}.$
 We call a hyperedge for which $\phi(e)>\frac{13}6$ \textit{heavy}.

 \begin{obs}
 For any heavy normal hyperedge $e\in E(H)$, the degree sequence of its vertices is $(1,2,2,x)$ for some $x\in[2..5].$
 \end{obs}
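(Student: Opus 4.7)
The plan is to combine the heaviness threshold $\phi(e)>\tfrac{13}{6}$ with the normality hypothesis. Normality excludes multiple degree-one vertices in the same hyperedge, and then the inequality is tight enough that a short arithmetic case analysis pins down the remaining three degrees.

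First I would observe that any two vertices of degree one lying in the same hyperedge $e$ are automatically twin vertices, since each of them is incident only to $e$. Hence a normal hyperedge contains at most one vertex of degree one. In particular, if $e$ has no degree-one vertex, every vertex in $e$ has degree at least $2$, which gives $\phi(e)\leq 4\cdot\tfrac{1}{2}=2<\tfrac{13}{6}$, contradicting heaviness. So a heavy normal hyperedge has exactly one vertex of degree one.

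Writing the remaining three degrees as $2\leq a\leq b\leq c$, heaviness becomes $\tfrac{1}{a}+\tfrac{1}{b}+\tfrac{1}{c}>\tfrac{7}{6}$. A short case split does the rest: if $a\geq 3$ the sum is at most $1$; if $a=2$ and $b\geq 3$ the sum is at most $\tfrac{1}{2}+\tfrac{1}{3}+\tfrac{1}{3}=\tfrac{7}{6}$, not strict; therefore $a=b=2$. The remaining inequality $\tfrac{1}{c}>\tfrac{1}{6}$ then forces $c\in[2..5]$, yielding the degree sequence $(1,2,2,x)$.

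No step here should be a serious obstacle; the argument is essentially an inequality case check once the at-most-one-leaf property has been isolated from normality. The only point to watch is using the strictness of $\phi(e)>\tfrac{13}{6}$ to eliminate the two boundary configurations $(a,b,c)=(2,3,3)$ and $(a,b,c)=(2,2,6)$, both of which realize $\phi(e)=\tfrac{13}{6}$ exactly.
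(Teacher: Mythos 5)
Your proof is correct and is exactly the intended argument: normality rules out two degree-one vertices (two such vertices in one hyperedge are automatically twins of degree one, making the edge special), and the strict inequality $\phi(e)>\frac{13}{6}$ then forces the degree sequence $(1,2,2,x)$ with $x\le 5$ by the short case check you give. The paper states this as an observation without proof, and your write-up supplies precisely the reasoning it relies on, including the correct use of strictness to exclude the boundary cases $(2,3,3)$ and $(2,2,6)$.
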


 \begin{claim}\label{clm:neighbouring_edges_withlow_phi}
 Let $e = \{a, b, c, d\}$ be a heavy normal hyperedge of $H$, where $\deg_H(a) = 1$ and $\deg(b) = \deg(c) = 2$. Let $f$ and $h$ be distinct hyperedges in $E(H)$ different from $e$, that contain $b$ and $c$, respectively. Then, $f$ and $h$ are normal hyperedges, and 
 $\min\{\phi(f), \phi(h)\} \leq 2.$
 \end{claim}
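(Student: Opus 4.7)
The plan is to derive both assertions by contradiction, in each case exhibiting a 4-set $T\notin E(H)$ whose intersection with every hyperedge of $H$ avoids size~$1$, thereby violating $B_4$-semi-saturation. The basic input is the degree data from the observation, $\deg(a)=1$, $\deg(b)=\deg(c)=2$, $\deg(d)\in[2..5]$, which locates each vertex of $e$: $a$ lies only in $e$, $b$ only in $\{e,f\}$, and $c$ only in $\{e,h\}$. In particular $b\notin h$ and $c\notin f$, a fact used repeatedly.

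To show $f$ (and symmetrically $h$) is normal, suppose $f$ contains a twin pair $\{u,v\}$ of common degree in $\{1,2\}$. If $b\in\{u,v\}$, the twin of $b$ must share $b$'s exact edge set $\{e,f\}$, which forces it to be $d$ with $\deg(d)=2$; then $\{b,d\}\subseteq e$ is a twin pair of degree $2$, contradicting normality of $e$. Otherwise $u,v\in f\setminus\{b\}$, and the same twin-constraint rules out $u$ or $v$ being $a$ (not in $f$), $c$ (not in $f$), or $d$ (would force the twin to be $c$), so $u,v\notin e$. The candidate non-edge is $T=\{a,u,v,b\}$: it lies outside $E(H)$ since $a\in T$ only appears in $e$ while $u\notin e$ forces $T\neq e$. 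One computes $T\cap e=\{a,b\}$ and $T\cap f=\{u,v,b\}$; if $u,v$ have degree $2$ with second edge $f'$, then also $T\cap f'=\{u,v\}$; every other hyperedge is disjoint from $T$ by the degree constraints on $a,u,v,b$. No size-one intersection arises, contradicting semi-saturation.

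Assume now $\phi(f)>2$ and $\phi(h)>2$. Subtracting $1/\deg(b)=1/2$ from $\phi(f)$ leaves the other three vertices of $f$ with reciprocal-degree sum above $3/2$, so at least one has degree $1$; normality of $f$ (just proven) rules out two such vertices coexisting in $f$, yielding a unique degree-$1$ vertex $a_f\in f$, and symmetrically $a_h\in h$. Both $a_f,a_h\notin\{a,b,c,d\}$ (by their degree, and because $a$ lies only in $e\neq f,h$), and $a_f\neq a_h$ since $f\neq h$. The set $T=\{a_f,b,c,a_h\}$ is not in $E(H)$ because $a_f$'s only edge $f$ does not contain $c$; one has $|T\cap e|=|T\cap f|=|T\cap h|=2$, and $T\cap X=\emptyset$ for every other hyperedge $X$ since $b,c$ are confined to $\{e,f,h\}$ and $a_f,a_h$ to $f,h$ respectively. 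This again violates semi-saturation.

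The main obstacle is the case analysis in the normality step: the twin pair can a priori overlap any vertex of $e$, and the normality of $e$ is precisely what closes off each such overlap. Once normality of $f$ and $h$ is in hand, the second assertion is a short verification with the explicit set $T=\{a_f,b,c,a_h\}$.
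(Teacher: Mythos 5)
Your proof is correct and follows essentially the same route as the paper: for normality you add the set $\{a,b,u,v\}$ built from the degree-one vertex $a$ and the alleged twin pair in $f$, and for the bound on $\min\{\phi(f),\phi(h)\}$ you add $\{a_f,b,c,a_h\}$ using the forced degree-one vertices of $f$ and $h$ --- exactly the two witness sets the paper uses. The only differences are cosmetic: you carry out the case analysis for the twin pair in more detail (with one slightly garbled but repairable parenthetical in the $u=d$ subcase), and you invoke normality of $f$ to get \emph{uniqueness} of the degree-one vertex, which is not actually needed.
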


 \begin{claimproof}
 If both hyperedges $f$ and $h$ contain degree $1$ vertices $v_f$ and $v_h$, then $\{v_f,v_h,b,c\}$ is not a hyperedge of $H$ and it intersects three hyperedges of $H$ each in two vertices, a contradiction. Thus we have $\min\{\phi(f), \phi(h)\} \leq 2.$ 
 
 If $f$ is a special hyperedge, it contains twin vertices $u$ and $v$, which are different from $b$ since $e$ is not a special hyperedge. Then, $\{a, b, u, v\}$ is not a hyperedge of $H$ and does not intersect any hyperedge in exactly one vertex, leading to a contradiction with $H$ being $B_4$-saturated. Thus $f$ is normal hyperedge, and by the same reasoning, $h$ is a normal hyperedge too.
 \end{claimproof}

 If the degree sequence of a normal hyperedge $h$ is $(1,2,2,2),$ by~\cref{clm:neighbouring_edges_withlow_phi} $h$ has two incident normal hyperedges $h_1,h_2$ with $\phi(h_1),\phi(h_2)\leq 2$. 
 We associate hyperedges $h_1$ and $h_2$ with $h$.
 If the degree sequence of a normal hyperedge $h$ is $(1,2,2,x)$ for some $x\in \{3,4,5\},$ by~\cref{clm:neighbouring_edges_withlow_phi} it has an incident hyperedge $h_1$ with $\phi(h_1)\leq 2$.
 We associate hyperedge $h_1$ with $h$.
 
 \begin{obs}
 No hyperedge is associated with two heavy normal hyperedges.
 \end{obs}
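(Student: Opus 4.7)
The plan is to argue by contradiction: assuming some hyperedge $g$ is associated with two heavy normal hyperedges $e_1 \neq e_2$, I would exhibit a non-hyperedge $T \in \binom{V(H)}{4}$ that meets every $e \in E(H)$ in $0$ or $2$ vertices, contradicting the $B_4$-semi-saturation of $H$.

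By the definition of the association, $g$ is a normal hyperedge distinct from $e_1$ and $e_2$, and for each $i \in \{1,2\}$ it contains a degree-$2$ vertex $u_i \in e_i$; since $\deg(u_i)=2$, the two hyperedges incident to $u_i$ are precisely $e_i$ and $g$. Let $a_i$ be the unique degree-$1$ vertex of $e_i$, which exists by the degree sequence $(1,2,2,x)$ of a heavy normal hyperedge. The first step is to verify that $a_1,a_2,u_1,u_2$ are four distinct vertices: $a_i \neq u_j$ for all $i,j$ by comparing degrees; $a_1=a_2$ would force $e_1=e_2$ because the unique hyperedge incident to either $a_i$ is $e_i$; and $u_1=u_2$ would yield $\{e_1,g\}=\{e_2,g\}$, again forcing $e_1=e_2$.

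Setting $T=\{a_1,a_2,u_1,u_2\}$, I would then compute $\lvert T \cap e \rvert$ for every $e \in E(H)$. The three hyperedges $e_1,e_2,g$ each contribute intersection size exactly $2$, realized by $\{a_1,u_1\}$, $\{a_2,u_2\}$, and $\{u_1,u_2\}$ respectively, where the ``no extra vertex'' part follows from the degree-$1$ constraint on $a_i$ and the fact that $u_i$ sits in exactly $\{e_i,g\}$. For any other hyperedge $e$, the same constraints force none of the four vertices of $T$ to lie in $e$, so $\lvert T \cap e \rvert = 0$. This simultaneously shows $T \notin E(H)$ (e.g.\ $a_2 \notin e_1$ rules out $T=e_1$, and similarly for $e_2$ and $g$). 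Consequently $H+T$ contains no new bow tie, contradicting the semi-saturation of $H$.

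The argument relies only on the definition of the association (and on \cref{clm:neighbouring_edges_withlow_phi} insofar as it guarantees that $g$ is normal and different from $e_1,e_2$); the only mildly delicate step is the distinctness check for $a_1,a_2,u_1,u_2$, after which the bookkeeping of intersection sizes is routine.
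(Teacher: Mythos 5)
Your proof is correct and takes essentially the same route as the paper: the paper also adds the $4$-set consisting of the degree-$1$ vertices of the two heavy hyperedges together with the degree-$2$ vertices from their intersections with the common associated hyperedge, and notes it meets every hyperedge in $0$ or $2$ vertices. Your version merely spells out the distinctness and intersection bookkeeping that the paper leaves implicit.
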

 \begin{claimproof}
 For the sake of contradiction, assume that two heavy normal hyperedges, $e$ and $f$, are associated with the same hyperedge $h$. Then, the set consisting of the degree 1 vertices from $e$ and $f$, together with the degree 2 vertices from $e \cap h$ and $f \cap h$, is not a hyperedge of $H$. Adding this set to $H$ does not create a bow tie, which leads to a contradiction.
 \end{claimproof} 
 
 As we have seen, every heavy normal hyperedge is associated with a normal hyperedge or a pair of normal hyperedges. 
 Such that the average of the function $\phi$ over them is upper bounded by 
 \begin{equation}\label{eq:average_weight}
 \frac{(1+\frac{1}{2}+\frac{1}{2}+\frac{1}{2})+2+2}{3},\frac{(1+\frac{1}{2}+\frac{1}{2}+\frac{1}{3})+2}{2}=\frac{13}{6}.
 \end{equation}

 With a simple case analysis, we also note that the sum of $\phi$ over the special hyperedges is bounded by $\frac 92$ in general and trivially by $4$ if there is only one special hyperedge. 
 Let $n'$ be the number of vertices belonging to at least one hyperedge.
 By~\eqref{eq:average_weight} and the previous remark, we have
 \begin{equation}\label{eq:bound_for_n'}
 n'=\sum_{e \in E(H)} \phi(e) \le \frac{13}{6}\abs{E(H)}+\frac{11}{6}.
 \end{equation}

 Note that there are at most $3$ isolated vertices. 
 If $H$ contains an isolated vertex, then no incident hyperedge of a heavy normal hyperedge contains a degree $1$ vertex. 
 Therefore, the upper bound in Equation~\eqref{eq:average_weight} will be significantly improved, to $\frac{3\cdot 2+2.5}4=\frac{17}{8}$, leading to the corresponding improvement in the linear term of Equation~\eqref{eq:bound_for_n'}; $n'\le \frac{17}{8}\abs{E(H)}$.
 For $n\ge 63$, we conclude that
 \[
 \abs{E(G)} \ge \frac{8}{17}(\abs{V(G)}-3) \ge \frac{6}{13}\abs{V(G)} -\frac{11}{13} 
 \]
 \qedhere
\end{proof}

\section{Saturation of for the 4-uniform bow tie}\label{sec:b_4_sat}

In this section, we prove Lemma~\ref{Lem:core}.

\begin{proof}[Proof of Lemma~\ref{Lem:core} ]
We construct a spanning subhypergraph of $H$ using a greedy approach:

\textbf{Step 1:} Begin by selecting a single edge to form the initial hypergraph $H_1$.

\textbf{Step 2:} While the current hypergraph $H_i$ is not yet spanning, update it by adding a hyperedge $e$ from $H$ to obtain $H_{i+1}$, with the following conditions:
\begin{itemize}
 \item $H_{i+1}$ remains connected.
 \item The chosen hyperedge $e$ maximizes the order of the resulting hypergraph $H_{i+1}$.
\end{itemize}

\textbf{Step 3:} Repeat this Step 2, until the hypergraph spans all vertices.

Since $H$ is connected, the procedure will terminate with a spanning subhypergraph of $H$.
Since $H$ is connected and $B_4$-free, each additional hyperedge from Step 2 contains one or two new vertices, thus we have $\lvert V(H_{i+1}) \rvert - \lvert V(H_i) \rvert \leq 2$, for all $i$.

\begin{claim}\label{Claim:Structure_of_sceleton_tree}
 Let $H'$ and $H''$ be two vertex-disjoint subhypergraphs of $H$, each constructed following the same procedure described in Steps 1-2, with one exception: a hyperedge is added only if it intersects the existing hypergraph in exactly two vertices.

 Then there exists a hypergraph $H'''$ such that
 \begin{itemize}
 \item $H'''\subseteq H$;
 \item $H',H''\subseteq H'''$;
 \item $H'''$ can be obtained in the same way as $H'$ and $H''$.
 \end{itemize}
\end{claim}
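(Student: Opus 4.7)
\begin{claimproof}[Proof proposal]
The plan is to build $H'''$ by extending $H'$ in two phases under the modified procedure: a greedy \emph{bridging} phase advancing until the current vertex set first meets $V(H'')$, followed by a \emph{copying} phase that incorporates the hyperedges of $H''$ in a re-rooted version of their own modified-procedure order.

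First I would start with $H^{(0)}=H'$ and iteratively add a hyperedge of $H$ having exactly $2$-intersection with the current vertex set. The underlying reachability claim is that greedy extension of any subhypergraph $H_0\subseteq H$ necessarily spans $V(H)$: if some $v\in V(H)$ were unreached, one could pick a shortest hyperedge path from $v$ to the greedy hypergraph $H^{\text{cur}}$, and its last hyperedge $f$ would have $|f\cap V(H^{\text{cur}})|\in\{0,1,2,3,4\}$. The values $0$ and $1$ are excluded by the definition of ``last'' and by $B_4$-freeness; $2$ contradicts maximality; and $3$ or $4$ are impossible because the path predecessor of $f$, being disjoint from $V(H^{\text{cur}})$, shares at least two vertices with $f$ by $B_4$-freeness, forcing at least two vertices of $f$ to lie outside $V(H^{\text{cur}})$.

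At the first moment the greedy hypergraph touches $V(H'')$, the responsible hyperedge $c^*$ contributes at least one new vertex $v_1\in V(H'')$. By $B_4$-freeness both its new vertices must lie in $V(H'')$: otherwise, any hyperedge of $H''$ containing $v_1$ would intersect $c^*$ in exactly one vertex. The same argument shows that every hyperedge of $H''$ meeting $\{v_1,v_2\}$ contains both, so there is $e^*\in E(H'')$ with $\{v_1,v_2\}\subseteq e^*$, and $|e^*\cap V(H^{(\text{post-}c^*)})|=2$; this edge becomes the first step of the copying phase.

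I then add the remaining hyperedges of $H''$ in an order obtained by re-rooting the modified-procedure construction of $H''$ at $e^*$. Since $V(H')\cap V(H'')=\emptyset$ and no bridging hyperedge added before $c^*$ touches $V(H'')$, the trace of the current hypergraph on $V(H'')$ after step $j$ of the copying phase is exactly the union of the $j$ already-added hyperedges of $H''$; hence the internal $2$-intersection invariant inside $H''$ transfers verbatim to the merged construction. The main obstacles are (i) the case analysis in the bridging-phase reachability claim and (ii) verifying that the modified-procedure construction of $H''$ admits a re-rooting at $e^*$, which I would treat by induction on $|E(H'')|$ using that a tree on at least two vertices has at least two leaves, so a removable leaf distinct from $e^*$ always exists; both arguments rest on the structural consequence of $B_4$-freeness that when a hyperedge is added via the modified procedure, its two shared vertices lie jointly inside some single previously-added hyperedge.
\end{claimproof}
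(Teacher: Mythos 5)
Your overall strategy is the same as the paper's in spirit: link $H'$ to $H''$ by a chain of hyperedges each meeting the growing union in exactly two vertices, with $B_4$-freeness ruling out $1$-intersections and a shortest-path argument ruling out intersections of size at least $3$. (The paper does this more directly, by taking a shortest hyperedge path $P$ from $H'$ to $H''$ and observing that consecutive hyperedges of $P$, as well as the terminal attachments to $H'$ and $H''$, all have intersection exactly two; your explicit treatment of the hand-off into $H''$ and of re-rooting $H''$'s construction order is more careful than the paper, which simply asserts that $H'\cup P\cup H''$ has the required properties.)

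There is, however, one genuinely false step. Your ``underlying reachability claim'' --- that the modified greedy extension of \emph{any} subhypergraph of $H$ necessarily spans $V(H)$ --- is not true: if every hyperedge of $H$ meets the starting hyperedge $[4]$ in at least three vertices (this is exactly the paper's $j=1$ case, $E(H)=\{A:\abs{A\cap[4]}\ge 3\}$), the modified procedure stalls immediately and never spans. The hole in your case analysis is the shortest path of length one from an unreached vertex $v$: there the hyperedge $f$ contains $v$ itself, has no ``path predecessor'', and may meet $V(H^{\mathrm{cur}})$ in three vertices. Fortunately you do not need the full claim; you only need that the bridging phase cannot stall while $V(H^{\mathrm{cur}})$ is still disjoint from $V(H'')$. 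That weaker statement is true and is proved by running your same case analysis on a shortest hyperedge path from $H^{\mathrm{cur}}$ to $H''$ rather than to an arbitrary vertex: the first hyperedge $f_1$ of such a path meets $V(H^{\mathrm{cur}})$ in at least two vertices by $B_4$-freeness, and either its successor on the path is disjoint from $V(H^{\mathrm{cur}})$ and shares at least two vertices with $f_1$, or (if the path has length one) $f_1$ also meets the disjoint set $V(H'')$ in at least two vertices; in both cases $\abs{f_1\cap V(H^{\mathrm{cur}})}=2$ and a legal greedy move exists. With that repair, and with the re-rooting lemma for $H''$ carried out as you sketch (each non-initial hyperedge of $H''$ meets every other hyperedge of $H''$ in $0$ or $2$ vertices, its attachment pair lying inside a single earlier hyperedge), the argument goes through.
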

\begin{claimproof}
 Since $H$ is connected, there exists a shortest path $P$ from $H'$ to $H''$. As $H$ is $B_4$-free, it follows that $P$ consists of a sequence of hyperedges, where each consecutive pair of hyperedges shares exactly two vertices, any other pair of hyperedges are vertex disjoint and the two vertices from the terminal hyperedges belong to $H'$ and $H''$, respectively.
 The hypergraph obtained from the union of the hyperedges $P$, $H'$ and $H''$ has all three required characteristics. 
\end{claimproof}

By Claim~\ref{Claim:Structure_of_sceleton_tree}, let $j$ be minimal index such that $\lvert H_{j+1} \rvert - \lvert H_j \rvert = 1$, if such a $j$ exists. Then, for every $j' > j$, we have $\lvert H_{j'+1} \rvert - \lvert H_{j'} \rvert = 1$. 
 
Let $j$ be the maximal index for which $\lvert H_j \rvert = \lvert H_{j-1} \rvert + 2$; if no such index exists, then set $j = 1$. 
In this latter case where $j=1$, every hyperedge contains at least $3$ out of the $4$ vertices of $H_1$, which without loss of generality is the hyperedge $[4]$.
Since $H$ is $B_4$-saturated we have $E(H)=\left\{ A \in \binom{[n]}{4} \colon \abs{A \cap [4]} \ge 3 \right\},$ which has size $4(n-4)+1 \ge n$ as $n\ge 5$. (For all $n<5$ $\abs{E(H)} \ge n-3$.)
From now on, we assume $j > 1$. 
During the construction of $H_j$ from $H_1$, we added two vertices at each step. Since $H$ is $B_4$-free, the two vertices added at each step are twin vertices in $H_j$. Even more, by Claim~\ref{Claim:Structure_of_sceleton_tree}, all twin vertices of $H$, belong to~$H_j$.

We now construct an auxiliary tree $T$, whose vertices correspond to the pairs of twin vertices in $H_j$, and an edge is placed between two pairs if their union forms a hyperedge of $H_j$. 
Note that every edge in $H$ intersects $T$ in at least three (of the original) vertices of $H.$
Also $\abs{E(T)}=j \geq 2$.
Let the core $C(T)$ be the set of twin pairs of vertices of $H$, for which the two vertices in the pair correspond to a vertex of $T$.

 \begin{claim}\label{clm:badleaves&nb_leaves}
 For a pair $(v_i,u_i) \in V(T)\setminus C(T)$ we have
 \begin{itemize}
 \item the vertex $(v_i,u_i)$ of $T$ is a leaf of $T$, or
 \item the vertex $(v_i,u_i)$ of $T$ has degree two in $T$, and has a neighbour which is a leaf in $T$, or
 \item $T=S_4$ and $(v_i,u_i)$ is a center of the star.
 \end{itemize}
 \end{claim}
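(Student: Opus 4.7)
The plan is to pick a hyperedge $f \in E(H)$ witnessing that $v_i$ and $u_i$ are not twin in $H$, and then to extract from the $B_4$-freeness of $H$ the structural constraints on the neighborhood of $(v_i,u_i)$ in $T$. Since $(v_i,u_i) \notin C(T)$, such an $f$ exists, and we may assume $v_i \in f$ and $u_i \notin f$. For every neighbor $(v_{k_l}, u_{k_l})$ of $(v_i,u_i)$ in $T$, the hyperedge $h_l = \{v_i, u_i, v_{k_l}, u_{k_l}\}$ lies in $E(H_j) \subseteq E(H)$ and already shares $v_i$ with $f$; because $H$ is $B_4$-free, $\abs{f \cap h_l} \ne 1$, so the three vertices of $f \setminus \{v_i\}$ must contain at least one vertex from each neighbor pair $\{v_{k_l}, u_{k_l}\}$. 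As these pairs are pairwise disjoint, this forces the degree $d$ of $(v_i,u_i)$ in $T$ to be at most $3$; the cases $d \le 1$ and $d \ge 4$ are respectively the leaf case and the impossible case.

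For $d = 3$, $f \setminus \{v_i\}$ picks exactly one vertex from each of the three neighbor pairs. I would then suppose, for contradiction, that some neighbor $(v_{k_l}, u_{k_l})$ has a further neighbor $(v_p, u_p) \ne (v_i, u_i)$ in $T$. The tree property forces $(v_p, u_p)$ to be distinct from the other neighbors of $(v_i, u_i)$, as otherwise $T$ would contain a triangle. Then the hyperedge $\{v_{k_l}, u_{k_l}, v_p, u_p\} \in E(H_j)$ meets $f$ in exactly the single vertex picked from $\{v_{k_l}, u_{k_l}\}$, violating $B_4$-freeness. Hence all three neighbors of $(v_i,u_i)$ are leaves, and since $T$ is connected this forces $T = S_4$ with $(v_i, u_i)$ as its center.

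The case $d = 2$ I would split according to how $f$ meets the two neighbor pairs. If $\abs{f \cap \{v_{k_l}, u_{k_l}\}} = 2$ for one pair, say $l = 1$, then the last slot of $f$ must hit the other pair, giving $f = \{v_i, v_{k_1}, u_{k_1}, x_2\}$ with $x_2 \in \{v_{k_2}, u_{k_2}\}$; if $(v_{k_2}, u_{k_2})$ had another neighbor $(v_{p_2}, u_{p_2})$ in $T$, then the corresponding hyperedge $\{v_{k_2}, u_{k_2}, v_{p_2}, u_{p_2}\}$ of $H_j$ would meet $f$ only in $x_2$, a contradiction, so $(v_{k_2}, u_{k_2})$ is a leaf (the case $l=2$ is symmetric). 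In the remaining subcase $f = \{v_i, x_1, x_2, y\}$ with $x_l \in \{v_{k_l}, u_{k_l}\}$ and $y$ outside both neighbor pairs, I assume for contradiction that both neighbors are non-leaves, producing further neighbors $(v_{p_1}, u_{p_1})$ and $(v_{p_2}, u_{p_2})$ in $T$ which are forced by the tree property to be distinct from each other and from the neighbors of $(v_i, u_i)$. Applying the intersection argument to $\{v_{k_l}, u_{k_l}, v_{p_l}, u_{p_l}\}$ for $l = 1, 2$ then forces $y$ to lie simultaneously in both disjoint pairs $\{v_{p_1}, u_{p_1}\}$ and $\{v_{p_2}, u_{p_2}\}$, which is impossible; hence $(v_i, u_i)$ has a leaf neighbor.

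I expect the main obstacle to be the $d = 2$ analysis, since one must treat the two distinct configurations of $f$ separately and, in each, carefully use the tree structure of $T$ to exclude the various ways a putative extra neighbor $(v_{p_l}, u_{p_l})$ of $(v_{k_l}, u_{k_l})$ could coincide with an existing neighbor of $(v_i, u_i)$ or with $(v_{p_{3-l}}, u_{p_{3-l}})$, each such coincidence being ruled out precisely because it would close a cycle in $T$.
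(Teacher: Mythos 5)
Your proposal is correct and follows essentially the same route as the paper: take a hyperedge witnessing that $(v_i,u_i)$ is not a twin pair in $H$, use $B_4$-freeness against the pair-hyperedges of $H_j$ to force it to meet every neighbouring pair (and, via the distance-two pairs, to spend two of its three remaining vertices on any non-leaf neighbour), and then count the three available slots to get the leaf/degree-two/$S_4$ trichotomy. The paper phrases this slightly more compactly in terms of components of $T\setminus(v_i,u_i)$, but the argument is the same.
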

 \begin{claimproof}
 Let $e \in E(H)$ intersect $(v_i,u_i)$ in one of the two vertices.
 Let $(v_i',u_i')$ be a neighbour of $(v_i,u_i)$ in $T$. 
 Then since $H$ is $B_4$-free and $\{v_i,u_i,v_i',u_i'\}$ is a hyperedge of $H$, $e\cap (v_i',u_i')\neq \emptyset$. 
 Even more, if $(v_i',u_i')$ is not a leaf of $T$ then $e$ has at least two original vertices in the component of $T\setminus (v_i,u_i)$ with the vertex $(v_i',u_i')$. 
 Thus $e$ has exactly one vertex common with $(v_i,u_i)$, at least one vertex in common with every component of $T \setminus (v_i,u_i)$, and at least two vertices in common for each component of $T \setminus (v_i,u_i)$ with at least two vertices. 
 Thus either $(v_i,u_i)$ is a leaf or has degree two and one neighbor which is a leaf, or $T=S_4$ and $(v_i,u_i)$ is a center of it. 
 \end{claimproof}
 
 As a corollary of~\cref{clm:badleaves&nb_leaves}, we note that if $C(T) \not= \emptyset$, $T \setminus C(T)$ is a union of isolated vertices and edges.
 We now prove that there are at least as many hyperedges containing at least one vertex outside $C(T)$, as there are vertices in $H$ outside $C(T)$, by considering three cases.

 \begin{itemize}
 \item Let $p_1p_2$ be an edge of $T \setminus C(T)$.
 Let $p_1=(c,d)$ be a leaf, $p_2=(a,b)$ be its neighbour and $p_3=(u,v)$ be a distance $2$ neighbour belonging to $C(T)$.
 The edge that intersects $p_2$ in one vertex must contain both $u,v$ as $H$ is $B_4$-free and $u,v$ are twins. 
 It also intersects $p_1$.
 Thus without loss of generality, we may assume $\{a,c,u,v\} \in E(H).$
 
 If no edge contains exactly three vertices from $\{a,b,c,d\},$ then every hyperedge intersecting $\{a,b,c,d\}$ contains $u,v$. Even more, since $H$ is $B_4$-saturated, we have \[
 \left\{f \cup \{u,v\}:f\in \binom{\{a,b,c,d\}}{2} \right\} \subset E(H).
 \]
 In this case, $B=\{a,u,v\}$ is a triple intersecting no hyperedge in a single vertex.
 
 If there is an edge containing exactly three vertices from $\{a,b,c,d\},$ every such edge is of the form $\{a,b,c,f\},$ for some vertex $f$ since $H$ is $B_4$-free. This implies immediately that no hyperedge intersects $B=\{a,b,c\}$ in a single vertex.
 As $H$ is $B_4$-saturated, we also know that $\{b,c,u,v\} \in E(H)$.

 In both cases, we have at least four hyperedges of $H$ of the form $\binom{ \{a,b,c,d,u,v\} }4$ and containing a vertex $a$ or $b$.

\item 
Let $p_1=(a,b)$ an isolated vertex of $T \setminus C(T)$. 
Let $p_1=(a,b)$ be a leaf of $T$ and and $p_2=(u,v)$ be its neighbor from $C(T)$.
There is a hyperedge that intersects $p_1$ in one vertex, without loss of generality we can assume it is $\{a,u,v,c\}$ for some vertex $c$.
Now every other hyperedge either contains $\{u,v\}$ or $\{a,b,c\}.$
This implies that also $\{u,v,b,c\} \in E(H)$ as $H$ is $B_4$-saturated, and that no hyperedge from $E(H)$ intersects $B=\{a,u,v\}$ in a single vertex.
Note that $c$ is not a part of a $K_2$ of $T \backslash C(T).$

If $c \not\in T,$ (which has to be read as $c \not \in H_j$) mark it as a special vertex.
All such special vertices and all vertices belonging to a component $K_1$ of $T \setminus C(T)$ are incident to at least two hyperedges containing exactly two of these vertices and two of the core vertices. 

\item Every vertex $c \not\in T$ which is not special, belongs to at least one hyperedge containing $c$ and three vertices of $H_j$.
\end{itemize}

Now we are ready to lower bound the number of hyperedges. 
Note that our goal is to show that the number of hyperedges is at least $n-3$. 
We know that for every vertex not from the core $C(T)$, we have at least one hyperedge on average. 
Observe that if there are two distinct pairs of twin vertices from $C(T)$, then their union is a hyperedge of $H$, as $H$ is $B_4$-saturated. 
Thus we have at least $\binom{\abs{C(T)}}{2}$ hyperedges containing vertices only from $C(T)$. 
As $\binom{\abs{C(T)}}{2}-2\abs{C(T)}\geq -3$, the number of hyperedges spanned by the vertices of $C(T)$, is at most three less than the number of vertices of $H$ in $C(T)$. Thus we are done if $H$ is twin-free

 From here we assume there is no pair of twin vertices in $H$, thus we have $C(T)= \emptyset.$
 By~\cref{clm:badleaves&nb_leaves}, we deduce that $T \in \{ P_3,P_4,S_4\},$ as $\abs{E(T)} \geq 2$.

 The elementary computer verification for $P_3, P_4$ and $S_4$ finishing the following explanation each time can be found in~\cite{C24github}. 
 The construction of $T$ was arbitrary, $H$ might have different such trees, from here we assume that $T$ was constructed in such a way that it maximized the diameter of~$T$.

 \begin{itemize}

 \item \textbf{case $T=P_3$.} 
 As $T$ was contracted with the maximum possible diameter, there are no two disjoint hyperedges, by Claim~\ref{Claim:Structure_of_sceleton_tree}.
 We may assume that $H$ contains the edges $[4]$ and $[3..6]$.
 The other hyperedges belong to $\binom{[6]}{4}$, or intersect $[6]$ in $3$ elements.
 For every $v \in [7..n]$ one can consider the hyperedges containing $v$ after removing $v$, $E_v=\{e \setminus v \colon e \in E(H), v \in e\} \subset \binom{[6]}{3}.$
 Let $S= \cup_{v \in [7..n]} E_v.$
 Given $S$, one can count the number $y$ of hyperedges in $\binom{[6]}{4}$ that intersect each edge of $S$ in at least $2$ vertices (which have to belong to the saturated hypergraph).
 Now $\abs{E(H)}-n= \sum_{v \in [7..n]} \left(\abs{E_v}-1\right) + y-6.$
 Let $x=\sum_{v \in [7..n]} \left(\abs{E_v}-1\right).$
 Recall that $S$ has no disjoint edges.
 
 If $S$ contains some $3$-edges that intersect in one vertex, we know that they belong to the same $E_v$.
 If there are $z$ pairs of edges in $S$ intersecting in one vertex, we know that $\binom{x}{2} \ge z$.
 Hence, for every choice of $S$, we can compute $z$ and $y$ and derive a lower bound for $x+y,$ which turns out to be at least $6$ in all cases, proving that $m \ge n.$ The latter has been done with an elementary computer program (\cite[doc. \text{P\_3\_case}]{C24github}).
 \item \textbf{case $T=P_4$.}
 Let $H_3$ be the hypergraph with vertex set $[8]$ and edges $[4],[3..6]$ and $[5..8].$
 Since $C(T)=\emptyset,$ there are hyperedges in $H$ intersecting $\{3,4\}$ and $\{5,6\}$ in exactly one element.
 By symmetry, we can either assume that $\{1,3,5,7\} \in E(H)$, or $\{1,3,5,6\}$ and $\{3,4,5,7\}$ are both in $E(H).$
 The hyperedges containing $3$ elements of $[8]$ intersect none of the present hyperedges in exactly one vertex. By considering $H_3$, the intersection with $[8]$ is thus of the form $\{1,3,4\},[2..4],[5..7]$ or $\{5,6,8\}.$
 By symmetry and the additional present edges, $[2..4]$ and $\{5,6,8\}$ cannot appear and three cases remain for the set $Y$ of triples which are the intersection of a hyperedge in $E(H)$ and $[8]$.
 Taking the minimum size of a maximal independent set in a subgraph of the Johnson graph $J(8,4,1)$ in all situations, we conclude that $m \ge n$ in all cases, more precisely there are at least $8$ hyperedges in $H$ among the ones in $\binom{[8]}4.$
 \item \textbf{case $T=S_4$.} 
 We may assume that $E(H_3)=\{ \{1,2,3,4\},\{1,2,5,6\},\{1,2,7,8\}\}.$
 Since $\{1,2\}$ is not a pair of twins, we can assume without loss of generality that $\{1,3,5,7\} \in E(H).$
 Every hyperedge which is not spanned by vertices $[8]$, has three neighbors in $[8]$, thus it contains either vertex $1$ or vertex $2$, thus it should intersect all three hyperedges of $T$ in at least three vertices, thus every such hyperedge contains vertices $[2]$. 
 Given that neighborhood, one can compute the number of hyperedges present in the saturated hypergraph and conclude exactly as in the previous case where $T=P_4.$ \qedhere 
 \end{itemize}
\end{proof}

\section*{Acknowledgement}

We thank S. English for sharing his expert knowledge on hypergraph saturation problems, and D. Chakraborti and A. Sgueglia for their early interest in the topic.


\bibliographystyle{abbrv}
\bibliography{ref}

\begin{thebibliography}{10}

\bibitem{AC19}
M.~Axenovich and M.~Csik{\'o}s.
\newblock Induced saturation of graphs.
\newblock {\em Discrete Math.}, 342(4):1195--1212, 2019.

\bibitem{BBMR12}
J.~Balogh, B.~Bollob{\'a}s, R.~Morris, and O.~Riordan.
\newblock Linear algebra and bootstrap percolation.
\newblock {\em J. Comb. Theory, Ser. A}, 119(6):1328--1335, 2012.

\bibitem{bollobas1986extremal}
B.~Bollob{\'a}s.
\newblock {\em Extremal graph theory with emphasis on probabilistic methods}, volume~62.
\newblock American Mathematical Soc., 1986.

\bibitem{bulavka2023weak}
D.~Bulavka, M.~Tancer, and M.~Tyomkyn.
\newblock Weak saturation of multipartite hypergraphs.
\newblock {\em Combinatorica}, 43(6):1081--1102, 2023.

\bibitem{C24github}
S.~Cambie.
\newblock Code and data for the paper ``hypergraph saturation for the bow tie''.
\newblock https://github.com/StijnCambie/HypergraphSaturation\_Bowtie, 2024.

\bibitem{HOG}
K.~Coolsaet, S.~D'hondt, and J.~Goedgebeur.
\newblock House of {G}raphs 2.0: A database of interesting graphs and more.
\newblock {\em Discrete Appl. Math.}, 325:97--107, 2023. Available at \url{https://houseofgraphs.org/}.

\bibitem{diskin2024saturation}
S.~Diskin, I.~Hoshen, D.~Kor{\'a}ndi, B.~Sudakov, and M.~Zhukovskii.
\newblock Saturation in random hypergraphs.
\newblock {\em arXiv preprint arXiv:2405.03061}, 2024.

\bibitem{EKZ23}
S.~English, A.~Kostochka, and D.~Zirlin.
\newblock Saturation for the 3-uniform loose 3-cycle.
\newblock {\em Discrete Math.}, 346(11):Paper No. 113504, 25, 2023.

\bibitem{erdHos1975problems}
P.~Erd{\H{o}}s.
\newblock Problems and results in graph theory and combinatorial analysis.
\newblock {\em Proc. British Combinatorial Conj., 5th}, pages 169--192, 1975.

\bibitem{erdHos1991saturated}
P.~Erd{\H{o}}s, Z.~F{\"u}redi, and Z.~Tuza.
\newblock Saturated r-uniform hypergraphs.
\newblock {\em Discrete mathematics}, 98(2):95--104, 1991.

\bibitem{EFT91}
P.~Erd{\H{o}}s, Z.~F{\"u}redi, and Z.~Tuza.
\newblock Saturated {{\(r\)}}-uniform hypergraphs.
\newblock {\em Discrete Math.}, 98(2):95--104, 1991.

\bibitem{erdos1964problem}
P.~Erd{ő}s, A.~Hajnal, and J.~W. Moon.
\newblock A problem in graph theory.
\newblock {\em The American Mathematical Monthly}, 71(10):1107--1110, 1964.

\bibitem{faudree2011survey}
J.~R. Faudree, R.~J. Faudree, and J.~R. Schmitt.
\newblock A survey of minimum saturated graphs.
\newblock {\em The Electronic Journal of Combinatorics}, 1000:DS19--Jul, 2011.

\bibitem{frankl1977families}
P.~Frankl.
\newblock On families of finite sets no two of which intersect in a singleton.
\newblock {\em Bulletin of the Australian Mathematical Society}, 17(1):125--134, 1977.

\bibitem{kalai1985hyperconnectivity}
G.~Kalai.
\newblock Hyperconnectivity of graphs.
\newblock {\em Graphs and Combinatorics}, 1(1):65--79, 1985.

\bibitem{KMW06}
P.~Keevash, D.~Mubayi, and R.~M. Wilson.
\newblock Set systems with no singleton intersection.
\newblock {\em SIAM J. Discrete Math.}, 20(4):1031--1041, 2006.

\bibitem{kronenberg2021weak}
G.~Kronenberg, T.~Martins, and N.~Morrison.
\newblock Weak saturation numbers of complete bipartite graphs in the clique.
\newblock {\em Journal of Combinatorial Theory, Series A}, 178:105357, 2021.

\bibitem{moshkovitz2015exact}
G.~Moshkovitz and A.~Shapira.
\newblock Exact bounds for some hypergraph saturation problems.
\newblock {\em Journal of Combinatorial Theory, Series B}, 111:242--248, 2015.

\bibitem{pikhurko1999minimum}
O.~Pikhurko.
\newblock The minimum size of saturated hypergraphs.
\newblock {\em Combinatorics, Probability and Computing}, 8(5):483--492, 1999.

\bibitem{pikhurko2000asymptotic}
O.~Pikhurko.
\newblock Asymptotic evaluation of the sat-function for r-stars.
\newblock {\em Discrete Mathematics}, 214(1-3):275--278, 2000.

\bibitem{pikhurko2001weakly}
O.~Pikhurko.
\newblock Weakly saturated hypergraphs and exterior algebra.
\newblock {\em Combinatorics, Probability and Computing}, 10(5):435--451, 2001.

\bibitem{pikhurko2004results}
O.~Pikhurko.
\newblock Results and open problems on minimum saturated hypergraphs.
\newblock {\em Ars Combinatoria}, 72:111--127, 2004.

\bibitem{Sauer70}
N.~Sauer.
\newblock On the existence of regular n-graphs with given girth.
\newblock {\em J. Comb. Theory}, 9:144--147, 1970.

\bibitem{shapira2023weakly}
A.~Shapira and M.~Tyomkyn.
\newblock Weakly saturated hypergraphs and a conjecture of tuza.
\newblock {\em Proceedings of the American Mathematical Society}, 151(07):2795--2805, 2023.

\bibitem{tuza1986generalization}
Z.~Tuza.
\newblock A generalization of saturated graphs for finite languages.
\newblock {\em Tanulm{\'a}nyok—MTA Sz{\'a}mit{\'a}stech. Automat. Kutat{\'o} Int. Budapest,(185)}, pages 287--293, 1986.

\bibitem{tuza1988extremal}
Z.~Tuza.
\newblock Extremal problems on saturated graphs and hypergraphs.
\newblock {\em Ars Combin}, 25:105--113, 1988.

\end{thebibliography}

\end{document}